\documentclass[article]{llncs}
\usepackage[usenames]{color}
\usepackage{amssymb, latexsym, mathtools,hyperref,graphicx,url}

\newcommand{\col}{\mathrm{col}}

\newcommand{\Z}{\mathbb{Z}}

\begin{document}

\title{A combinatorial approach to knot recognition}
\titlerunning{A combinatorial approach to knot recognition}

\author{Andrew Fish$^{1}$, Alexei Lisitsa$^{2}$, and David Stanovsk\'y$^{3}$\thanks{Partially supported by the GA\v CR grant 13-01832S.}}
\authorrunning{A. Fish, A. Lisitsa, D. Stanovsk\'y}

\institute{1. School of Computing, Engineering and Mathematics, University of Brighton, UK \\ 2. Department of Computer Science, The University of Liverpool, UK \\ 3. Department of Information Systems and Mathematical Modeling, International IT University, Almaty, Kazakhstan \&
Department of Algebra, Faculty of Mathematics and Physics, Charles University, Prague, Czech Republic}




\maketitle

\begin{abstract}
This is a report on our ongoing research on a combinatorial approach to knot recognition, using coloring of knots by certain algebraic objects called quandles. The aim of the paper is to summarize the mathematical theory of knot coloring in a compact, accessible manner, and to show how to use it for computational purposes. In particular, we address how to determine colorability of a knot, and propose to use SAT solving to search for colorings. The computational complexity of the problem, both in theory and in our implementation, is discussed. In the last part, we explain how coloring can be utilized in knot recognition.
\end{abstract}

\section{Introduction}\label{sec:intro}

Knot recognition is a central problem in computational knot theory. Given two \emph{knots} (closed loops in space without self-intersection), or rather their 
descriptions (such as their diagrams, Gauss codes, etc.), are they \emph{equivalent} in the sense that there is an ambient isotopy (informally, a continuous deformation of space) transforming one knot into the other?

Although it all started with Kelvin's hypothesis that different elements are differently knotted vortices of ether \cite{Sil},
most of the work on knot recognition is a typical mathematical endeavour, with little care paid to potential applications.
Nevertheless, current real life motivation to study knots exist: for example, some molecules, such as DNA, can occur knotted, and their chemical or biological properties depend on the way they are knotted (it is reported that certain antibiotics exploit topological properties of DNA). See \cite[Section 7]{A} or \cite{BF} for details and other applications of knots in biology, chemistry and physics.

Knot theory, since its beginnings in late 19th century, has used mostly geometric and topological methods, often supported by an algebraic framework (algebraic topology, homological algebra, polynomials). This is somewhat surprising, since tame knots can be considered as finite graph-like structures (see \cite{N} for more ideas in this direction); the tameness condition is imposed to rule out pathological cases. An entirely new approach appeared in foundational papers by Joyce \cite{J} and Matveev \cite{M} where certain algebraic structures, called \emph{quandles}, were introduced and used to obtain a complete invariant (up to reverse mirroring), called the knot quandle. While their motivation was to capture the essential part of the knot group, their works paved the way to introduce an extensive class of combinatorial knot invariants, based on coloring of arcs by (finite) quandles.

The present paper is an initial report on our project to turn the coloring method into a practical computational tool for knot recognition.
In the first part (Section \ref{sec:theory}), we summarize the mathematical theory necessary to understand the algorithms. The ideas are old, but dispersed around various sources; our contribution is a compact self-contained presentation of the material.
Then, we discuss how to find a coloring, computationally. A theoretical framework and the complexity of the problem is addressed in Section \ref{ssec:coloring_theory}. To search for a coloring in practice, we propose to use SAT solving. This idea seems to be new and our initial experiments are reported in detail in Section \ref{ssec:coloring_experiment} which is the core of the paper. In the last part, we explain how coloring can be utilized for knot recognition (Section~\ref{sec:recognition}), building upon earlier ideas of \cite{CESY,FL}.

Readers not familiar with basic notions of knot theory are recommended to 
take a fine crash course on Wikipedia\footnote{\url{http://en.wikipedia.org/wiki/Knot\_theory}}. 
A standard textbook reference is~\cite{A}. By an \emph{unknot} we mean any knot equivalent to the trivial knot, a simple circle in space. Informally, unknots are knots that can be untangled. Non-specialists are recommended to read the article \cite{HK} to learn what is so hard about unknotting unknots.

All knots in the text are assumed to be tame and oriented. For computational purposes, knots are presented in the form of diagrams, which are regular planar projections, so that the only singularities of the projection are transverse double points, and these bear additional information about the relative height of strands at every crossing. The size of a knot diagram then refers to the number of crossings, which determines, for instance, the size of its Gaussian code or other diagram encodings. Greek letters $\alpha,\beta,\gamma$ will be used for knot arcs, and latin letters $a,b,c$ for colors.

\section{Classical approach to knot recognition}

Mathematicians have been working on the knot recognition problem for more than a century, yet are nowhere near a satisfactory solution.
The standard approach is based on calculating \emph{invariants}, properties shared by equivalent knots. Classical invariants use various algebraic constructions to code some of the topological properties of a knot. Typical examples are polynomial invariants, for instance:
\begin{itemize}
	\item The \emph{Alexander polynomial}. It can distinguish most small knots, yet there are infinitely many knots with trivial Alexander polynomial (hence indistinguishable from the unknots). A polynomial time algorithm, fast in practice, exists (based on calculation of a determinant of a sparse matrix).
	\item The \emph{Jones polynomial} and several common generalizations of the Jones and Alexander polynomials. They tend to distinguish even more knots, yet it is not known whether they distinguish unknots. Better performance comes for a price: calculating (or even approximating) the Jones polynomial is known to be \#P-hard.
	\item The currently fashionable \emph{Khovanov homology} is the ``categorification" of the Jones polynomial. It provably distinguishes unknots, but is even harder to calculate in practice.
\end{itemize}

Among other classical methods, let us mention the \emph{knot group}, the fundamental group of the knot complement, which informally describes the ``holes" the knot creates in space. While fairly powerful in theory (for instance, it does distinguish unknots), it is not directly useful for calculations, since there is no algorithm to handle the isomorphism problem for finitely presented groups.

%
%

From what we have said so far, it would not even be clear whether knot recognition is algorithmically decidable. However, it was shown to be decidable by Haken in 1962, by an algorithm which was considered impractical for implementation. The complexity of knot recognition is an interesting and widely open problem.

Even the special case of \emph{unknot recognition} (formally, given a knot, is it equivalent to an unknot?) is not fully understood, although there has been significant progress over the past 20 years. At the moment, unknot recognition is known to be both in the NP and coNP complexity classes (coNP assuming the Generalized Riemann Hypothesis, GRH). A polynomial-time checkable certificate of unknottedness was found by Hass, Lagarias and Pippenger \cite{HLP}, using the theory of normal surfaces. A more natural certificate, a polynomial length sequence of Reidemeister moves that untangles an unknot, has been conjectured for a long time, but was only proved to exist by Lackenby \cite{L} very recently (however, it provides no clue for how to find that short sequence efficiently). On the other hand, a polynomial-time certificate for knottedness was proved to exist by Kuperberg \cite{Kup}, assuming GRH, taking the Kronheimer-Mr\'owka representation of the knot group over the group $SL(2,\mathbb C)$ and using GRH to project it to a representation over a finite field of a sufficiently small size.

\section{Combinatorial approach to knot recognition}\label{sec:theory}

A combinatorial approach to knot theory is possible, following Reidemeister, since two knots are ambient isotopic if and only if they have diagrams which differ by a finite sequence of local diagram rewrite rules called Reidemeister moves. Thus, invariants of knots can be constructed as invariants of knot diagrams which do not change under the Reidemeister moves.

\subsection{Coloring knots}

One of the first invariants ever considered was \emph{tricolorability}: color the arcs of a knot diagram with three colors in a way that, at every crossing, the three strands have either the same, or three different colors. Every knot admits three \emph{trivial colorings}, with all arcs painted in the same color.
A knot diagram is called tricolorable if it admits a non-trivial tricoloring. It is not difficult to show that tricolorability is invariant with respect to Reidemeister moves, and so it is a property of knots (not just their diagrams) which is invariant with respect to knot equivalence. More generally, the number of non-trivial tricolorings of a knot $K$, $\col_3(K)$, is an invariant.

\begin{figure}
\begin{center}
\includegraphics[scale=0.24]{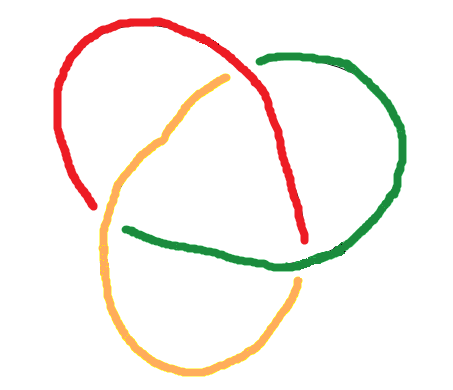}
\hskip3cm
\includegraphics[scale=0.22]{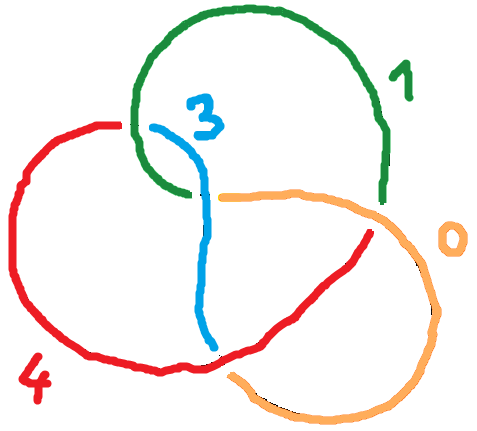}
\end{center}
\caption{Tricolored trefoil and five-colored figure-eight.}
\label{fig:coloredknots}
\end{figure}

Figure~\ref{fig:coloredknots} shows an example of a non-trivial tricoloring. Tricolorability is not a terribly good invariant: for instance, it cannot distinguish the figure-eight knot from an unknot, as the figure-eight knot admits no non-trivial tricoloring.

More generally, one can consider \emph{Fox $n$-colorings}, using colors $0,1,\dots,n-1$, and the rule that, for every crossing, the sum of the colors of the two lower strands equals twice the color of the bridge, modulo $n$. The number $\col_n(K)$ of non-trivial $n$-colorings is an invariant, which is easily calculated using linear algebra and the Chinese Remainder Theorem. Figure \ref{fig:coloredknots} shows an example of a non-trivial 5-coloring of the figure-eight knot (only four colors are used, but, importantly, the arithmetic is modulo 5). Notice that Fox 3-coloring is the same thing as tricoloring.

\begin{figure}
\begin{center}
\includegraphics[scale=0.15]{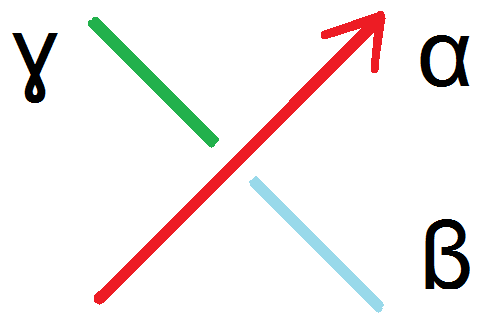}
\end{center}
\caption{Labeled crossing.}
\label{f:xing}
\end{figure}

There is an common framework for these and other arc coloring invariants. Let $C$ be a set (of colors) and $*$ a binary operation on $C$. A \emph{coloring} of a knot diagram is a mapping $f$ assigning to every arc a color from $C$ in a manner that, for every crossing with arcs labeled $\alpha,\beta,\gamma$ as in Figure \ref{f:xing}, the equation
\begin{equation}f(\alpha)*f(\beta)=f(\gamma)\tag{E}\end{equation}
is satisfied. A coloring is called trivial if it only uses one color.
For example, tricolorability uses a set $C$ with $|C|=3$ and the operation defined by $a*a=a$ for every $a\in C$ and $a*b$ equal to the third color whenever $a\neq b$. Fox $n$-coloring uses $C=\{0,1,\dots,n-1\}$ and $a*b=2a-b\bmod n$.

Let $\col_Q(D)$ denote the number of non-trivial colorings of the knot diagram $D$ by the algebraic structure $Q=(C,*)$.
Not every algebraic structure provides an invariant.
The following theorem determines such instances.

\begin{theorem}\label{thm:invariant_if_quandle}
Let $Q=(C,*)$ be an algebraic structure over a set $C$ with a binary operation $*$ satisfying the following conditions for every $a,b,c\in C$:
\begin{enumerate}
	\item[(1)] $a*a=a$ (idempotence);
	\item[(2)] there is a unique $x\in C$ such that $a*x=b$ (unique left division);
	\item[(3)] $a*(b*c)=(a*b)*(a*c)$ (left self-distributivity).
\end{enumerate}
Then $\col_Q$ is a knot invariant.
\end{theorem}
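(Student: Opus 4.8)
The plan is to invoke Reidemeister's theorem, quoted at the start of Section~\ref{sec:theory}: two diagrams represent equivalent knots if and only if one is obtained from the other by a finite sequence of Reidemeister moves R1, R2, R3. Hence it suffices to prove that $\col_Q(D)$ is unchanged when a single move is applied to $D$. Each move alters the diagram only inside a small disc, leaving every arc outside it intact. So I would fix a move, let $D'$ be the result, and construct a bijection between the colorings of $D$ and those of $D'$ that restricts to the identity on every arc meeting the complement of the disc. Because the colors outside are preserved and the colors of the finitely many arcs inside the disc turn out to be forced by the coloring equation~(E), such a bijection automatically sends one-color colorings to one-color colorings; consequently it matches trivial colorings with trivial ones and non-trivial with non-trivial, so the counts $\col_Q(D)$ and $\col_Q(D')$ coincide.

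Then I would treat the three moves in turn, observing that each axiom is responsible for exactly one move. Throughout I use that, at a crossing labelled as in Figure~\ref{f:xing}, the over-arc color $a$ acts on the under-strand by the left translation $x\mapsto a*x$, which axiom~(2) makes a bijection of $C$; its inverse is the unique solution guaranteed by left division, so the relation between the two under-arcs at a crossing can be read in either direction. For R1 the loop forces the two arcs meeting at the single self-crossing to receive equal colors: depending on the orientation of the kink, the crossing equation reads $a*a=c$ or $a*c=a$, and idempotence (axiom~(1))---together with the uniqueness in axiom~(2) for the second form---forces $c=a$; erasing the kink merges these two arcs consistently. For R2 one of the two strands is the over-arc at both new crossings and so keeps a single color $a$; as the other strand passes under them, its color is transformed by $x\mapsto a*x$ and then by the inverse map (the two new crossings have opposite sign), returning to its original value. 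The middle arc color $a*x$ is forced, and axiom~(2) is precisely what guarantees the inverse map exists.

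The heart of the argument, and the step I expect to be most delicate, is R3. Here three strands meet in three crossings, and I would label the three incoming arcs with colors $a,b,c$ and propagate them through the crossings on both sides of the move using rule~(E). After the bookkeeping, the only outgoing arc whose color is computed differently on the two sides carries $a*(b*c)$ before the move and $(a*b)*(a*c)$ after it (or a symmetric variant), so the colorings of $D$ and $D'$ agree on all boundary arcs precisely when left self-distributivity (axiom~(3)) holds; the remaining interior colors are forced as before. The subtle points to get right are the orientation conventions and the fact that R1, R2 and R3 each come in several variants differing by crossing signs and strand directions; I would verify one representative of each and note that the invertibility supplied by axiom~(2) reduces the mirror and orientation variants to the cases already done. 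Combining the three bijections with the triviality-preservation observation from the first paragraph then completes the proof that $\col_Q$ is a knot invariant.
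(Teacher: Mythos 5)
Your proposal is correct and takes essentially the same route as the paper's own proof: invariance under each Reidemeister move via a bijection of colorings fixing all arcs outside the move's disc, with axiom (1) handling R1, axiom (2) handling R2 (and supplying uniqueness for the orientation and mirror variants of R1 and R3), and axiom (3) handling R3, plus the observation that forced interior colors make trivial colorings correspond to trivial ones. Your left-translation phrasing of the R2 case is a cosmetic difference only.
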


The algebraic structures satisfying conditions (1), (2), (3) are called \emph{quandles}. (We use the \emph{left} notation for quandles.) 

Theorem \ref{thm:invariant_if_quandle} is a consequence of the theory developed in \cite{J,M}. Here is a sketch of the direct proof.

\begin{figure}
\begin{center}
\includegraphics[scale=0.25]{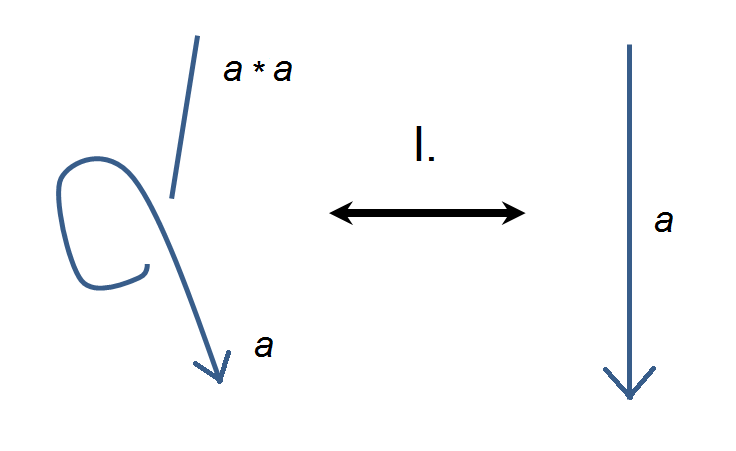}
\qquad\qquad
\includegraphics[scale=0.25]{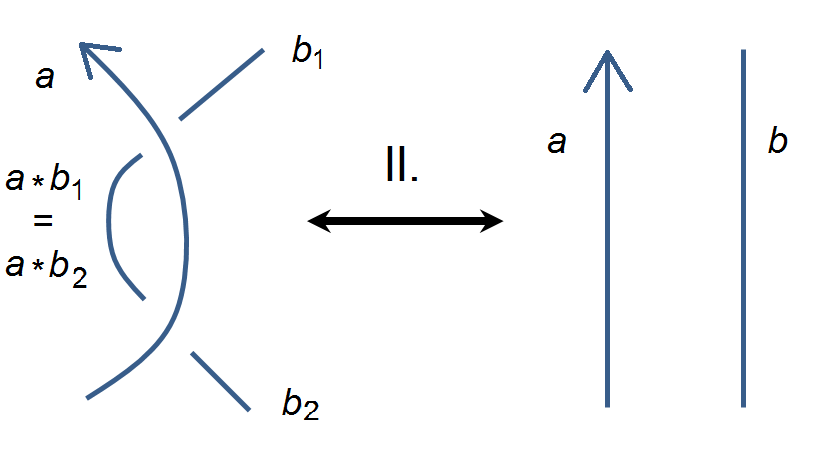}
\medskip\\
\includegraphics[scale=0.25]{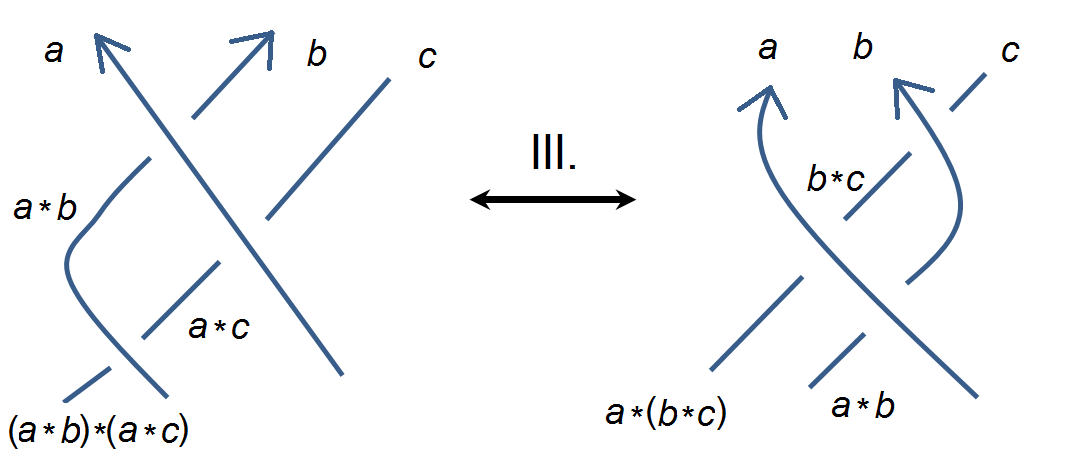}
\end{center}
\caption{Reidemeister moves and invariance of coloring.}
\label{f:rm}
\end{figure}

It is sufficient to show that $\col_Q$ is invariant with respect to the three Reidemeister moves, i.e., if two diagrams $D,D'$ differ by a single move, the number of non-trivial colorings remains the same. We prove that every coloring of $D$ corresponds uniquely to a coloring of $D'$ that uses identical colors for arcs outside the scope of the move, including those crossing the move's borderline.

For the type I move, there are two options for the orientation. For the downwards orientation as in Figure \ref{f:rm}, assume the color of the lower arc in the lefthand picture is $a$. Then, the color of the upper arc is $a*a$. Using (1), we have $a*a=a$, hence every coloring of the lefthand diagram colors both arcs in the same color, and thus corresponds to a unique coloring of the righthand diagram. For the upwards orientation, the upper arc has a color $x$ such that $a*x=a$. Using (2), this color is uniquely determined, and using (1), $x=a$, and the same argument proves the case.

For the type II move, there are two options for the orientation, again. For the upwards orientation as in Figure \ref{f:rm}, assume the colors of the border arcs in the lefthand picture are $a,b_1,b_2$, respectively. Then, the color of the middle arc is $a*b_1=a*b_2$, hence uniquely determined by $a,b_1,b_2$. Moreover, using (2), we have $b_1=b_2$. Hence every coloring of the lefthand diagram corresponds to a uniquely determined coloring of the righthand diagram. Thanks to (1), non-trivial colorings correspond mutually. A similar argument proves the downwards orientation.

For the type III move as in Figure \ref{f:rm}, assume the colors of the top arcs be $a,b,c$, respectively. The colors of the remaining arcs are uniquely determined as in the picture. To obtain the same color for the left bottom arc, we use condition (3). For the other three orientations, we proceed similarly;  condition (2) is needed for unique solutions as in case I.

Looking at the proof sketch, we see that the three axioms of quandles relate very well to the three Reidemeister moves. Invariance with respect to type II moves uses precisely condition (2), and invariance with respect to types I and III moves uses precisely conditions (1) and (3), respectively, in some cases under the assumption of unique left division. The converse statement, that \emph{quandles are the only structures providing arc coloring invariants of knots}, is also true in certain way, but the precise statement and a proof is beyond the scope of the present paper.

\subsection{Quandles}\label{ssec:quandles}

A natural question arises: what really are quandles? The answer is not yet fully understood, but in the most important case, of so called \emph{algebraically connected quandles}, it is. We will briefly summarize the theory in the next paragraph. Readers with no background in group theory can safely skip it and continue reading about the examples of quandles below.

A quandle $Q=(C,*)$ is algebraically connected if the permutation group generated by the left translations $L_a(x)=a*x$ is transitive on $C$. It is not difficult to prove that colors used in a coloring generate an algebraically connected subquandle \cite[Section 5.2]{Oht}, hence, without loss of generality, we can consider only $Q$-colorings with a connected $Q$. Such quandles were studied in detail in several papers, the strongest results were obtained recently in \cite{HSV}. The main theorem states that connected quandles are in 1-1 correspondence to certain configurations in transitive groups: every connected quandle on a set $C$ is uniquely determined by a pair $(G,\zeta)$ where $G$ is a transitive group on $C$ and $\zeta$ is a central element of the stabilizer $G_e$ such that $\langle\zeta^G\rangle=G$ (uniqueness up to the choice of $e$); an isomorphism theorem is available. Using the theory of \cite{HSV}, and a library of transitive groups of degree $n$, one can easily enumerate all connected quandles of size $n$ up to isomorphism. Currently, such a library is available for $n\leq47$.

Let us introduce three important classes of quandles.

A quandle $Q=(C,*)$ is called \emph{affine} if there is an abelian group $G=(C,+)$ (informally, an addition on the set of colors) and an automorphism $\varphi$ such that $a*b=a-\varphi(a)+\varphi(b)$ for every $a,b\in C$ (informally, $*$ is an affine combination of colors). For example, Fox $n$-coloring uses the affine quandle with $G=\Z_n$ and $\varphi(x)=-x$. An affine quandle is connected if and only if the mapping $x\mapsto x-\varphi(x)$ is onto.

Given a group $G$ and a conjugacy class $C$ in $G$, then $(C,*)$ with $a*b=aba^{-1}$ is called a \emph{conjugation quandle} over $G$. Conjugation quandles may or may not be connected.

A quandle is called \emph{simple} if it has no proper homomorphic images (other than itself and the trivial one). Finite simple quandles with $|C|>2$ are always connected and were characterized in \cite{AG,J2}: they are either affine, or arise from a finite simple group using a sort of parametrized conjugation operation on its orbit of transitivity; the construction can be used to create a large list of simple quandles. Within the framework of \cite{HSV} (see the description above), simple quandles are recognized as those where every factor of $G$ is cyclic and the centralizer of $\zeta$ is contained in $G_e$ \cite{AG}; the message is that it is computationally easy to verify whether a given quandle is simple.

\subsection{Colorability and simple quandles}

If $\col_Q(K)>0$, we say that the knot $K$ is \emph{$Q$-colorable}. Now, $Q$-colorability is also an invariant, which is easier to calculate than $\col_Q(K)$: one only has to establish that $\col_Q(K)>0$, whereas the actual number of colorings, $\col_Q(K)$, is often rather big due to the high symmetry of quandles \cite{CESY}, and finding all of them can be computationally costly.

The following observation suggests that if we only care about colorability (and not the actual number of colorings), we can restrict to simple quandles.

\begin{lemma}\label{lm:simple}
Let $K$ be a knot which is colorable by a finite quandle $Q$. Then $K$ is colorable by a finite simple quandle $Q'$ such that $|Q'|\leq|Q|$.
\end{lemma}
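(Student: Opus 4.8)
The plan is to pass from the witnessing coloring to the subquandle it generates, and then to quotient that subquandle by a maximal congruence; the point will be that taking a homomorphic image preserves colorings, while the restriction to the generated subquandle is exactly what prevents the coloring from collapsing to a trivial one.

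First I would fix a non-trivial coloring $f$ witnessing that $\col_Q(K)>0$, and replace $Q$ by the subquandle $Q_0$ generated by the set $\im(f)$ of colors actually used by $f$. Since $Q_0$ is a subquandle, $*$ restricts to it and the result of every crossing operation already lies in $\im(f)\subseteq Q_0$, so $f$ is still a (non-trivial) coloring by $Q_0$; moreover $Q_0$ is finite and $|Q_0|\leq|Q|$. The purpose of this reduction is that now $\im(f)$ generates all of $Q_0$, a property I will use below to control non-triviality after quotienting.

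Next, since $Q_0$ is finite its congruence lattice is finite, so I can choose a congruence $\theta$ that is maximal among those different from the all-relation (if $Q_0$ is already simple this just takes $\theta$ to be the identity and $Q'=Q_0$). By the correspondence between congruences of a quotient and congruences above $\theta$, the quotient $Q'=Q_0/\theta$ then has only two congruences and is therefore a finite simple quandle with $|Q'|\leq|Q_0|\leq|Q|$; note $|Q'|\geq 2$ because $\theta$ is not the all-relation. Writing $\phi\colon Q_0\to Q'$ for the projection, the fact that $\phi$ is a quandle homomorphism lets me apply it to each crossing equation $f(\alpha)*f(\beta)=f(\gamma)$, obtaining $\phi(f(\alpha))*\phi(f(\beta))=\phi(f(\gamma))$, so $\phi\circ f$ is a coloring of $K$ by $Q'$.

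The one remaining point, which I expect to be the heart of the matter, is that $\phi\circ f$ is still non-trivial. The image of $\phi\circ f$ is $\phi(\im(f))$, and since $\im(f)$ generates $Q_0$ we have $\langle\phi(\im(f))\rangle=\phi(Q_0)=Q'$. If $\phi\circ f$ used only one color, then $Q'$ would be generated by a single element, but idempotence forces a one-element generating set to yield a one-element subquandle, so $|Q'|=1$, contradicting $|Q'|\geq 2$. Hence $\phi\circ f$ is a non-trivial coloring of $K$ by the finite simple quandle $Q'$, which is exactly what is required. This argument is precisely why the passage to $Q_0=\langle\im(f)\rangle$ cannot be omitted: a maximal quotient of $Q$ itself could collapse all the colors of $f$ to a single point, whereas once $\im(f)$ generates the whole quandle this is impossible.
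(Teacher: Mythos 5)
Your proposal is correct and follows essentially the same route as the paper: restrict to the subquandle generated by the colors actually used, quotient it by a maximal congruence to get a finite simple quandle, and push the coloring through the projection homomorphism. The only cosmetic difference is in the non-triviality step, where the paper observes that a congruence block containing all the colors is a subquandle and hence must be all of $Q''$ (making the congruence total, a contradiction), while you argue dually that the image of the coloring generates $Q'$ and a single idempotent element generates only a one-element subquandle --- the same use of idempotence in equivalent form.
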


\begin{proof}
Consider a non-trivial coloring $f$ of $K$ by $Q$.
Let $Q''$ be the subquandle of $Q$ generated by all colors used in the coloring $f$. Indeed, $f$ is a coloring of $K$ by $Q''$. Let $\alpha$ be a maximal congruence of $Q''$. Then the factor, $Q'=Q''/\alpha$, is a finite simple quandle (because $\alpha$ is maximal). Consider the composition $f'=\pi\circ f$ where $\pi$ is the natural projection $Q''\to Q'$. Then $f'$ is a non-trivial coloring of $K$ by $Q'$. Equation (E) follows from the fact that $\pi$ is a homomorphism: for every crossing as in Figure \ref{f:xing}, $f'(\alpha)*f'(\beta)=\pi(f(\alpha))*\pi(f(\beta))=\pi(f(\alpha)*f(\beta))=\pi(f(\gamma))=f'(\gamma)$. If $f'$ was trivial, then all colors used by $f$ were in one block, $B$, of $\alpha$. Since congruence blocks are subquandles and $Q''$ is generated by all such colors, $B=Q''$, hence $\alpha$ was the total congruence $Q''\times Q''$, a contradiction.
\end{proof}

\section{Finding a coloring}\label{sec:coloring}

Given a quandle $Q$ and a knot $K$, a natural question is ``how do we calculate $\col_Q(K)$, the number of non-trivial $Q$-colorings?" For simplicity, let us focus on the decision problem of \emph{$Q$-colorability}: ``does there exists a non-trivial $Q$-coloring of $K$?", i.e. can we decide if $\col_Q(K)>0$. We will always assume a knot $K$ is presented as a diagram, and we let $|K|$ denote the number of crossings of the diagram.

\subsection{Theory}\label{ssec:coloring_theory}

Naturally, this problem is in the complexity class NP: given an assignment of colors to arcs, it is easy to check whether it is a non-trivial $Q$-coloring. To find a coloring, one has to solve a system of equations over the quandle $Q$: for every crossing as in Figure \ref{f:xing}, we have the equation $x_\alpha*x_\beta=x_\gamma$, where $x_\alpha,x_\beta,x_\gamma$ are variables that determine the colors of the arcs $\alpha,\beta,\gamma$. This can be viewed as an instance of the Constraint Satisfaction Problem (CSP): arcs are variables, $Q$ is the domain, and the equations are the constraints. In general, CSP is an NP-complete problem (SAT is a special instance of CSP). But how hard are the knot coloring instances? In particular, how hard is $Q$-colorability for a fixed quandle $Q$? How hard is $Q$-colorability for a fixed knot $K$?

One can try a simple brute force search: to every one of $|K|$ arcs, assign one of $|Q|$ colors, and check whether this is a non-trivial coloring; we are  handling $|Q|^{|K|}$ assignments. For a fixed knot $K$, this gives a polynomial time algorithm with respect to $|Q|$, but this is impractical for larger knots. Can we do with a better exponent than $|K|$?

Given a braid representation of $K$, with $n$ strands (the smallest possible $n$ is called the \emph{braid index} of $K$), the brute force search is only required to assign colors to the initial $n$ arcs, since the colors of the remaining arcs are uniquely determined; hence, we only handle $|Q|^n$ assignments. Whilst this provides a dramatic improvement in practice, the algorithm is still exponential-time with respect to $|K|$ (leaving aside conversion into braid representation). See \cite{CESY} for a more detailed description and experimental results.

Universal algebra provides a different point of view on coloring. We are solving a system of $|K|$ equations over the quandle $Q$. The complexity of equation solving over general algebraic structures has been studied extensively. In our case, it follows from general results of Larose and Z\'adori \cite{LZ} that the problem of solving a general system of equations over a quandle $Q$ is polynomial-time if and only if $Q$ is affine, and it is NP-complete otherwise. Consequently, colorability by affine quandles (and Fox coloring in particular) is easy. Nevertheless, it does not mean that colorability by non-affine quandles is hard, because the systems of equations arising from knot coloring have a very special form! It is an interesting open problem whether a polynomial time algorithm for $Q$-colorability exists for every quandle $Q$.

We are not aware of any other theoretical results applicable to the complexity of quandle colorings.

\subsection{Practice}\label{ssec:coloring_experiment}

The only attempt to calculate quandle colorings on a larger scale we know about is reported in \cite{CESY}. Their implementation was tested on all knots with at most 12 crossings, 
their performance graph exhibits a fast growth of running time with respect to $|Q|$.

We propose a more efficient approach to knot coloring: a reduction to the Boolean Satisfiability Problem, SAT.
Fix a connected quandle $Q=(\{1,\dots,q\},*)$ and a knot diagram $K$ with $|K|=n$, with arcs numbered $\alpha_1,\dots,\alpha_n$. We consider $nq$ boolean variables $v_{i,c}$ that determine whether the arc $\alpha_i$ has the color $c$. We need to satisfy the following constraints:
\begin{itemize}
	\item Every arc has a unique color: the obvious description uses the clauses
	\[v_{i,1}\vee\ldots\vee v_{i,q}\quad\text{ and }\quad \neg v_{i,c}\vee\neg v_{i,d}\]
	 for every $i=1,\dots,n$ and $c=1,\dots,q$, $d=c+1,\dots,q$.
	\item Not all arcs have the same color: the obvious description uses, for every $c=1,\dots,q$, the clause
	\[\neg v_{1,c}\vee\ldots\vee \neg v_{n,c}.\]
	\item Crossing equations (E): for every crossing, with a bridge $\alpha_i$ over the strands $\alpha_j$ to the right and $\alpha_k$ to the left, we use $q^2$ formulas of the form \[(v_{i,c}\wedge v_{j,d})\to v_{k,c*d}.\]
\end{itemize}

An important ingredient is basic symmetry breaking: since connected quandles are homogeneous, we can assume that the arc $\alpha_1$ has color 1. (If we were counting the number of colorings, and not just checking colorability, then we could recover $\col_Q(K)$ by multiplying the answer by $|Q|$).

We implemented the procedure, employing a standard SAT-solver MiniSat 1.14. 
For our experiments, we used the family
\begin{itemize}
	\item[{\bf SQ}.] of all 354 simple quandles of size $\leq47$, indexed in accordance to size,
\end{itemize}
(the reason is explained in Lemma \ref{lm:simple}); and the following three families of knots:
\begin{itemize}
\item[{\bf 12A}.] all 1288 alternating knots with crossing number 12 (taken from \cite{knotinfo}),
\item[{\bf T2}.] $(2,n)$-torus knots with $n=11,21,31,\dots,91$,
\item[{\bf T3}.] $(3,n)$-torus knots with $n=14,20,26,\dots,98$.
\end{itemize}
(Torus knots are some of the best known and easiest to implement, potentially infinite, families of knots.)
Figures \ref{f:12}-\ref{f:qn_torus} present performance graphs with respect to various parameters.

\begin{figure}
\begin{center}
\includegraphics[scale=0.7]{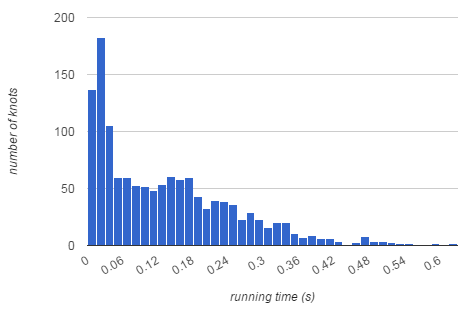}
\end{center}
\caption{Distribution of running times (in seconds) for the {\bf 12A} family, averaged over all quandles in {\bf SQ}.}
\label{f:12}
\end{figure}

First of all, we must stress that neither the number of crossings, nor the braid index, are the most relevant knot parameters with respect to running time. Figure \ref{f:12} shows the distribution of running times for the family {\bf 12A}. Whilst most knots are colored quickly, there are several ``slow" knots, with the record being held by the knot with KnotInfo \cite{knotinfo} code 12a\_1092, see Figure \ref{f:1092}. We were unable to identify any possible reason that makes this particular knot harder than others. Also notice the high noisiness of the performance graphs for the $(3,n)$-torus knots in Figure \ref{f:k}. Yet again, it is not merely the number of crossings that determines the running time.

For the {\bf T3} family, the average running time fluctuates more than the median running time. This suggests that the peaks are caused by a few very long computations, while a ``random" coloring job runs relatively fast. For example, for the $(3,92)$-torus knot, which is the last but one node on the right graph in Figure~\ref{f:k}, the running times for most quandles are no more than a few seconds, but the few exceptions run for a few hundreds of seconds. Perhaps this is an indication that while it is fast to determine $Q$-colorability on average, it could be hard in the worst case.

\begin{figure}
\begin{center}
\includegraphics[scale=0.5]{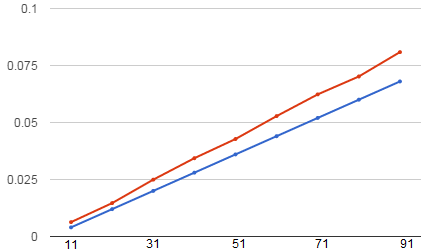}
\quad
\includegraphics[scale=0.5]{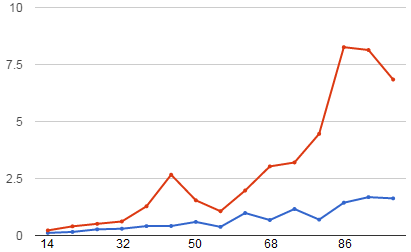}
\end{center}
\caption{Average (red, top line) and median (blue, bottom line) running times (s) over all quandles in {\bf SQ}, with respect to knot size for knots in {\bf T2} (on the left) and {\bf T3} (on the right), respectively.}
\label{f:k}
\end{figure}

Both graphs in Figure \ref{f:k} suggest that the dependence of median running time with respect to knot size is roughly linear for torus knots.
This is not surprising, and probably far from the general case because $(k,n)$-torus knots have a fixed braid index $k$, for every $n$, and so a brute force search in the spirit of \cite{CESY} runs in linear time with respect to $n$, by choosing $k$ initial colors and running along the torus producing the remaining coloring values. Nevertheless, in the {\bf T3} case, it seems the SAT solver has a hard time to recognize the braids, and therefore, the running times fluctuate.

\begin{figure}
\begin{center}
\includegraphics[scale=0.5]{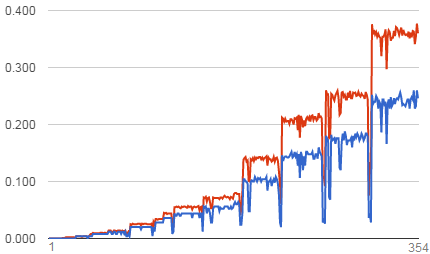}
\quad
\includegraphics[scale=0.5]{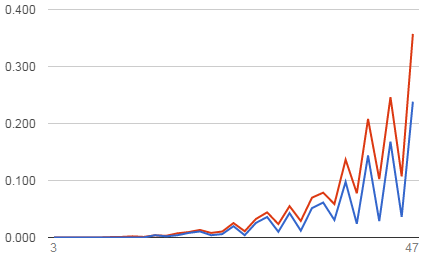}
\end{center}
\caption{Average (red, top line) and median (blue, bottom line) running times (s) over knots in {\bf 12A}, with respect to quandle index (on the left), and quandle size (on the right), respectively.}
\label{f:q12}
\end{figure}

\begin{figure}
\begin{center}
\includegraphics[scale=0.5]{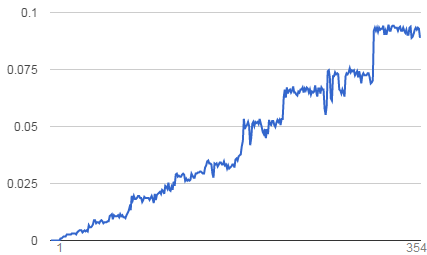}
\quad
\includegraphics[scale=0.5]{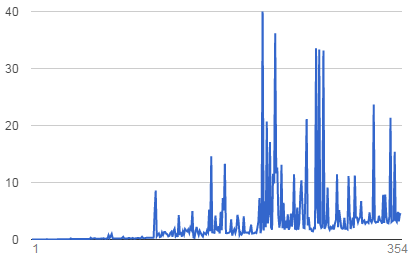}
\end{center}
\caption{Average running times (s) over knots in {\bf T2} (on the left) and {\bf T3} (on the right), respectively, with respect to quandle index.}
\label{f:qn_torus}
\end{figure}

The graphs in Figures \ref{f:q12} and \ref{f:qn_torus} indicate the time complexity with respect to quandle index and quandle size (by index, we mean the position in the list {\bf SQ}). Looking at the lefthand graphs, addressing the {\bf 12A} and {\bf T2} knots, we observe a staircase behavior. It tells that quandles of equal size have similar running times. For {\bf 12A} knots, we observe occasional deep drops. These are precisely the positions of non-affine quandles, which are, on average, much faster than the affine ones. This is somewhat surprising, since, in theory, non-affine quandles are the harder ones (see Section \ref{ssec:coloring_theory}). The reason might be a higher symmetry of affine quandles, invisible to MiniSat, together with its inability to employ the fast coloring algorithm.

For {\bf T3} knots (Figure \ref{f:qn_torus}, right), the situation is different: the graph is much noisier and the majority of the peaks are at non-affine positions. While non-affine quandles seem faster in ``random" cases, they also provide the majority of very long running times.

Observing the growth of running time with respect to quandle index, we see it is somewhat faster than linear, but certainly subquadratic. What about the dependence with respect to quandle size, $|Q|$? Since the number of simple quandles of size $n$ appears to grow roughly linearly with $n$ (this is only an empirical observation based on the family {\bf SQ}, we have no formal arguments yet), we shall add one to the degree. Figure \ref{f:q12} confirms the idea. Since all affine simple quandles have prime power size, and all non-affine simple quandles have size divisible by at least two primes, the right graph reflects the drops and peaks of the left graph.

We can draw the following conclusion. Even a simple implementation, using an obsolete solver and no fine tuning, is competitive, outperforming the brute-force search considerably.
Our programs and computation data are available at our website\footnote{\url{http://www.karlin.mff.cuni.cz/~stanovsk/quandles}}.

\section{Recognizing knots and unknots}\label{sec:recognition}


\subsection{Proving inequivalence}

Let $K_1,K_2$ be two knots. If we find a quandle $Q$ such that $\col_Q(K_1)\neq\col_Q(K_2)$ (or, in particular, such that $K_1$ is $Q$-colorable and $K_2$ is not), we can conclude that $K_1$ and $K_2$ are not equivalent. Large libraries of connected quandles can be created using the methods described in Section \ref{ssec:quandles}. For instance, it is feasible to compile the complete list of connected quandles up to size 47, the complete list of conjugation quandles over any group with hundreds of elements, the complete list of affine quandles of any size in the order of thousands.

An interesting experiment is described in \cite{CESY}. All knots with at most 12 crossings can be distinguished (up to mirror image) using a list of 26 finite quandles, the largest having 182 elements. Most pairs of knots are distinguished using a fairly small quandle, and therefore relatively quickly. More details and other interesting findings can be found in \cite{CESY}.

Unfortunately, no upper bound is known for the size of a quandle that distinguishes two inequivalent knots, and it is not even known whether a finite quandle is sufficient. In particular, it is not known whether coloring provides a decision procedure for knot recognition.

\subsection{Recognizing unknots}\label{ssec:unknot}

In \cite{FL}, the first two named authors proposed to use automated theorem proving on knot quandles for unknot recognition, i.e., for the decision of whether a given knot is equivalent to an unknot. Here we present the idea in terms a quandle coloring in a somewhat reshaped manner.  The method is based on Theorem \ref{thm:unknot_col}, which is collected from several works.

\begin{theorem}\label{thm:unknot_col}
The following are equivalent for a knot $K$:
\begin{enumerate}
	\item[(1)] $K$ is knotted (i.e., not an unknot).
	\item[(2)] There is a quandle $Q$ such that $\col_Q(K)>0$.
	\item[(3)] There is a finite quandle $Q$ such that $\col_Q(K)>0$.
	\item[(4)] There is a finite simple quandle $Q$ such that $\col_Q(K)>0$.
	\item[(5)] There is a conjugation quandle $Q$ over the group $SL(2,p)$, for a prime $p$, such that $\col_Q(K)>0$.
\end{enumerate}
\end{theorem}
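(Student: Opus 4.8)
The plan is to establish the cycle $(1)\Rightarrow(5)\Rightarrow(4)\Rightarrow(3)\Rightarrow(2)\Rightarrow(1)$, in which every arrow except the first is elementary, so that all the real content is concentrated in $(1)\Rightarrow(5)$. Starting from the easy end: $(4)\Rightarrow(3)\Rightarrow(2)$ are trivial inclusions, since finite simple quandles are finite quandles and finite quandles are quandles. For $(5)\Rightarrow(4)$, a conjugation quandle over $SL(2,p)$ is a conjugacy class inside a finite group, hence finite, so colorability by it is colorability by a finite quandle, and Lemma \ref{lm:simple} upgrades this to colorability by a finite simple quandle. For $(2)\Rightarrow(1)$ I would argue contrapositively: an unknot has a diagram consisting of a single crossingless circle with just one arc, so every $Q$-coloring of that diagram uses one color and is trivial; since $\col_Q$ is a knot invariant by Theorem \ref{thm:invariant_if_quandle}, $\col_Q(K)=0$ for all $Q$, which rules out (2). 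Hence (2) forces $K$ to be knotted.

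The heart of the theorem is $(1)\Rightarrow(5)$, and the first step is to set up the dictionary between conjugation-quandle colorings and group representations. Reading off the Wirtinger presentation of the knot group $\pi_1(S^3\setminus K)$ --- one meridian generator per arc and one relation $\gamma=\alpha\beta\alpha^{-1}$ per crossing --- a coloring of $K$ by the conjugation quandle of a group $G$ is exactly a homomorphism $\rho\colon\pi_1(S^3\setminus K)\to G$; the meridian images automatically lie in a single conjugacy class $C$, and the coloring is by the conjugation quandle on $C$. Such a coloring is non-trivial precisely when $\rho$ is non-abelian: two distinct colors are two distinct conjugate elements, impossible in an abelian image, while conversely if all meridians shared one image $g$ the image would be the cyclic group $\langle g\rangle$. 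Thus (5) reduces to producing a non-abelian representation of the knot group into some $SL(2,p)$.

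To produce such a representation I would invoke the two ingredients underlying Kuperberg's certificate \cite{Kup}. The Kronheimer--Mr\'owka detection of the unknot guarantees that a non-trivial knot admits an irreducible, in particular non-abelian, representation $\rho\colon\pi_1(S^3\setminus K)\to SU(2)\subset SL(2,\mathbb C)$, with meridians sent to a fixed non-central conjugacy class. After conjugation the matrix entries of $\rho$ lie in the ring of integers of a number field; reducing modulo a suitable prime ideal over a rational prime $p$ yields a representation into $SL(2,\F_q)$ that remains non-abelian for all but finitely many $p$ (GRH, used in \cite{Kup} only to bound the size of $p$, is not needed for mere existence). Translating back through the dictionary of the previous paragraph yields a non-trivial coloring by the conjugation quandle on the image class, establishing (5) and closing the cycle.

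The main obstacle is, unsurprisingly, localized entirely in $(1)\Rightarrow(5)$: it rests on the deep Kronheimer--Mr\'owka theorem that $SU(2)$ instanton theory detects the unknot, together with the arithmetic reduction of a characteristic-zero representation to a finite field; everything else is formal. The one technical loose end I would flag is matching the field, since the reduction naturally lands in $SL(2,\F_q)$ with $q=p^f$: reaching $SL(2,p)$ exactly as stated may require choosing $p$ so that the relevant eigenvalues become rational over $\F_p$, or else reading clause (5) with $\F_q$ in place of $\F_p$. The conjugation-quandle translation is insensitive to this choice, so it affects only the cosmetics of the statement, not the argument.
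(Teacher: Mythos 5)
Your proof is correct, but it is organized quite differently from the paper's. The paper assembles the theorem from four independent pairwise equivalences, each with its own source: $(1)\Leftrightarrow(2)$ is the Joyce--Matveev completeness of the knot quandle \cite{J,M}, $(2)\Leftrightarrow(3)$ is a genuinely non-trivial finiteness theorem of Clark--Saito--Vendramin \cite{CSV}, $(3)\Leftrightarrow(4)$ is Lemma~\ref{lm:simple}, and $(1)\Leftrightarrow(5)$ is Kuperberg \cite{Kup}; the easy remark $(2)\Rightarrow(1)$ via Theorem~\ref{thm:invariant_if_quandle} is added separately, exactly as in your contrapositive argument. You instead close the single cycle $(1)\Rightarrow(5)\Rightarrow(4)\Rightarrow(3)\Rightarrow(2)\Rightarrow(1)$, which concentrates all the depth in $(1)\Rightarrow(5)$ and, as a logical byproduct, dispenses entirely with both the Joyce--Matveev result and the \cite{CSV} theorem --- a real economy, since once Kuperberg's representation exists, a conjugation quandle over $SL(2,p)$ is in particular a (finite) quandle. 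The trade-off is that your version makes every implication, including the soft-looking $(1)\Rightarrow(2)$ and $(2)\Rightarrow(3)$, rest on the Kronheimer--Mr\'owka gauge-theoretic input, whereas the paper's modular assembly records that these equivalences have independent, purely quandle-theoretic proofs. Your dictionary between conjugation-quandle colorings and Wirtinger representations is sound (for the non-triviality equivalence one should note the conjugating elements in the Wirtinger relations lie in the image, so an abelian image forces a single color along the connected knot), and the loose end you flag about landing in $SL(2,\F_q)$ rather than $SL(2,p)$ is resolved in Kuperberg's argument by choosing the prime to split completely in the relevant number field (Chebotarev; GRH is needed only to bound the size of such a $p$, as the paper also notes), so clause (5) stands exactly as written.
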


The equivalence of (1) and (2) was proved in \cite{J,M}. The equivalence of (2) and (3) was proved in \cite{CSV}. The equivalence of (3) and (4) follows from Lemma~\ref{lm:simple}. The equivalence of (1) and (5) is a translation of Kuperberg's certificate of knottedness \cite{Kup} into the language of quandles (this part of Kuperberg's result is independent of GRH which is only needed to guarantee that $p$ is small enough).
Notice that the implication $(2)\Rightarrow(1)$ follows from Theorem \ref{thm:invariant_if_quandle}: the trivial knot has only one arc, hence admits only trivial colorings; since $\col_Q$ is an invariant, so does any unknot.

Theorem \ref{thm:unknot_col} turns quandle coloring into a decision procedure for unknot recognition: given a knot on input, we run two independent algorithms (ideally in parallel), one seeking for a certificate of knottedness, the other for unknotting. For knottedness, it is sufficient to find a non-trivial coloring; condition (3) says it is sufficient to consider finite colorings, and conditions (4) and (5) suggest particular families of finite quandles to try. For unknottedness, it is sufficient to \emph{prove} that no non-trivial coloring exists, and automated theorem proving provides a tool to do so.

Detailed experiments with the proposed decision procedure will be reported in a subsequent paper.
To certify knottedness, for example, the family {\bf SQ} of simple quandles of size $\leq47$ is sufficient for all {\bf 12A} knots and for many larger knots. Certification running times are competitive against state-of-the-art in knot recognition.
The unknottedness certification has been tested on various famous ``hard unknots" with a positive outcome, see \cite{FL} for details. 

\begin{figure}
\begin{center}
\includegraphics[scale=0.33]{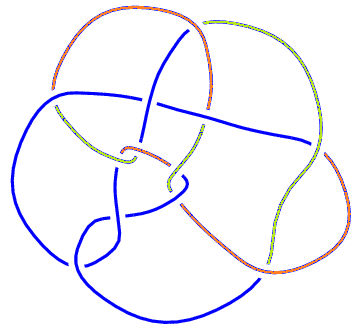}
\end{center}
\caption{Tricolored knot with KnotInfo \cite{knotinfo} code 12a\_1092.}
\label{f:1092}
\end{figure}

Efficience of the knottedness certification heavily relies on efficience of the coloring procedure used. Nevertheless, for a particular knot, the complexity of the two problems seems to be related only loosely: for example, the slowest knot 
reported in Section \ref{ssec:coloring_experiment} is tricolorable (see Figure \ref{f:1092}), hence certified very quickly.

\subsection{Classic vs. combinatorial}

We believe that many classical results in knot theory can be explained in terms of coloring, perhaps in a more systematic or conceptually cleaner form. A few sample results can be found in \cite[Section 7]{CESY}.
A different kind of example has been mentioned earlier in our paper: Kuperberg's certificate of knottedness is a prime $p$ 
and a non-commutative representation of the knot group over $SL(2,p)$, which is the same thing as a non-trivial coloring by a conjugation quandle over $SL(2,p)$ (the Wirtinger presentation of the knot group uses arcs as generators and conjugation relations analogous to condition (E) for every crossing).

Here is another, perhaps more important, example. The Alexander invariant
is essentially the same thing as coloring by affine quandles. We refer to \cite{B} for a precise statement, and we state formally a result
related to Theorem~\ref{thm:unknot_col}.

\begin{theorem}\label{thm:alex_col}
The following are equivalent for a knot $K$.
\begin{enumerate}
	\item[(1)] $K$ has a non-trivial Alexander polynomial.
	\item[(3)] There is a finite affine quandle $Q$ such that $\col_Q(K)>0$.
	\item[(4)] There is a finite simple affine quandle $Q$ such that $\col_Q(K)>0$.
\end{enumerate}
\end{theorem}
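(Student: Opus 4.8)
The plan is to translate affine colorings into linear algebra over the Laurent polynomial ring $R=\Z[t^{\pm1}]$, identify the resulting obstruction with the Alexander polynomial, and then close the cycle $(1)\Rightarrow(4)\Rightarrow(3)\Rightarrow(1)$. Throughout I view a finite affine quandle $\aff{G,\varphi}$ as the abelian group $G$ turned into an $R$-module by letting $t$ act as $\varphi$ (and $t^{-1}$ as $\varphi^{-1}$); this is legitimate exactly because $\varphi$ is invertible. For such a quandle the coloring equation (E) becomes the $R$-linear relation $f(\gamma)=(1-t)f(\alpha)+t\,f(\beta)$. Consequently, for a fixed diagram $K$ with $n$ arcs, the set of all $\aff{G,\varphi}$-colorings is naturally the $R$-module $\mathrm{Hom}_R(\mathcal C,G)$, where $\mathcal C=R^n/N$ is presented by the crossing relations $e_k-(1-t)e_i-t\,e_j$ (one per crossing).

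First I would isolate the trivial colorings. The coefficients of every relation sum to $1-(1-t)-t=0$, so the augmentation $\epsilon\colon\mathcal C\to R$, $\bar e_i\mapsto 1$, is well defined and surjective; I take its kernel to be the Alexander module $\mathcal A_K$. Since $R$ is free, the sequence $0\to\mathcal A_K\to\mathcal C\xrightarrow{\epsilon}R\to 0$ splits, and applying $\mathrm{Hom}_R(-,G)$ yields a decomposition in which the summand $G=\mathrm{Hom}_R(R,G)$ is precisely the constant (trivial) colorings. Hence $K$ admits a non-trivial $\aff{G,\varphi}$-coloring if and only if $\mathrm{Hom}_R(\mathcal A_K,G)\neq 0$. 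The classical input I invoke here is that $\mathcal A_K$ has a \emph{square} presentation matrix $M'$, the reduced Alexander matrix (delete one generator and the redundant relation), with $\det M'=\Delta_K(t)$ up to a unit; thus $\mathcal A_K=\mathrm{coker}(M')$.

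With this dictionary the two remaining implications are short. For $(3)\Rightarrow(1)$, a non-trivial affine coloring gives a nonzero homomorphism $\mathcal A_K\to G$, so $\mathcal A_K\neq 0$; were $\Delta_K$ a unit, $M'$ would be invertible and $\mathcal A_K=\mathrm{coker}(M')=0$, a contradiction, so $\Delta_K\neq 1$. For $(1)\Rightarrow(4)$ I run this backwards: if $\Delta_K\neq 1$ then $M'$ is not invertible, and since a surjective endomorphism of a finitely generated module over a commutative ring is an isomorphism, $\mathcal A_K\neq 0$. Being finitely generated and nonzero over the Noetherian ring $R$, the module $\mathcal A_K$ has a maximal submodule, giving a simple quotient $\mathcal A_K\twoheadrightarrow R/\mathfrak m$. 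As $R$ is a Jacobson ring, $R/\mathfrak m=\F_q$ is finite, and since it is generated as a ring by the image $\omega$ of $t$ over the image $\F_p$ of $\Z$, we get $\F_q=\F_p[\omega]$; so $\omega$ acts irreducibly and $\aff{\F_q,\omega\cdot}$ is a simple affine quandle. The composite map is, via the splitting, a non-trivial coloring by it, and $(4)\Rightarrow(3)$ is immediate.

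The genuinely knot-theoretic ingredient, and the step I expect to be the main obstacle in making everything rigorous, is the normalization guaranteeing that $\aff{\F_q,\omega\cdot}$ is a bona fide connected simple quandle, i.e. that $\omega\neq 0,1$. That $\omega\neq 0$ is clear since $t$ is a unit of $R$. For $\omega\neq 1$ the value $\Delta_K(1)=\pm 1$ is exactly what is needed: reducing modulo $(t-1)$ gives $\mathcal A_K/(t-1)\mathcal A_K=\mathrm{coker}(M'|_{t=1})$ with $\det M'|_{t=1}=\Delta_K(1)=\pm 1$ a unit, so this quotient vanishes and no simple quotient of $\mathcal A_K$ can have $t$ act trivially; hence $1-\omega$ is invertible in $\F_q$, which is precisely the connectedness criterion of Section~\ref{ssec:quandles}. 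Assembling the cycle then finishes the proof, the only facts imported from outside the paper being the determinantal description of $\Delta_K$ and the identity $\Delta_K(1)=\pm 1$.
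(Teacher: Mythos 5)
Your argument is correct, and it is genuinely your own route: the paper gives no proof of this theorem at all --- it simply defers to Bae \cite{B} for the precise relationship between the Alexander invariant and affine colorings, and leaves the ``simple'' refinement (4) implicit. It is worth noting that the paper's own tool for producing simple quandles, Lemma \ref{lm:simple}, would \emph{not} yield $(3)\Rightarrow(4)$ here: the subquandle generated by the colors used need not be affine, so the simple quotient that lemma produces need not be affine either. Your module-theoretic detour --- realizing the colorings as $\mathrm{Hom}_R(\mathcal{C},G)$, splitting off the constant colorings via the augmentation, and passing to a simple quotient $R/\mathfrak{m}$ of $\mathcal{A}_K$, with the Jacobson property of $\Z[t^{\pm1}]$ forcing the residue field to be finite --- sidesteps this and in fact proves the stronger implication $(1)\Rightarrow(4)$ directly. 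Your handling of the one delicate normalization, $\omega\neq 1$, via $\Delta_K(1)=\pm1$ and right-exactness modulo $(t-1)$, is exactly right, and you correctly observe that it is needed twice (for connectedness and for simplicity). Two spots deserve one more line each to be fully rigorous: (i) the identification $\mathcal{A}_K\cong\mathrm{coker}(M')$ requires the classical Wirtinger row-redundancy (the deleted row is an $R$-linear combination of the others; deleting an arbitrary row of a presentation matrix changes the cokernel), which is slightly more than ``delete the redundant relation''; (ii) the step from module-simplicity of $\F_q$ with $\omega\neq 0,1$ to quandle-simplicity of $\aff{\F_q,\omega}$ should be justified, either by citing the classification of simple affine quandles in \cite{AG,J2} (already invoked in Section \ref{ssec:quandles}) or by verifying that congruences of a connected affine quandle correspond to submodules. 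With those two references supplied, the cycle $(1)\Rightarrow(4)\Rightarrow(3)\Rightarrow(1)$ is complete and self-contained modulo exactly the two classical facts you flagged.
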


The theorem can be used to considerably improve efficiency of search for inequivalence certificates. Let $K_1,K_2$ be two knots. First, we calculate their Alexander polynomials $\Delta_1,\Delta_2$ (a fast algorithm exists).
If $\Delta_1\neq\Delta_2$, the knots are inequivalent. Else, continue as before. If we only test for $Q$-colorability (such as in the knottedness certificate described in the previous subsection), we can exclude all affine quandles from the search. This can result in a considerable cut of the search space: for instance, out of the 354 simple quandles with at most 47 elements, only 23 are non-affine.


\section{Conclusion}\label{sec:conclusion}

We presented a new approach to knot recognition, based on coloring the arcs of a knot by certain algebraic objects. Unlike most of the classical invariants, finding a coloring is a combinatorial problem, amenable to a (smart) exhaustive search. The coloring method is provably a decision procedure for recognizing unknots, and works very well in practice for general knot recognition, too. In both cases, most instances of inequivalence are certified very quickly.


The obvious future work is a careful analysis of the parameters of the decision procedure (efficient SAT encoding, choice of the quandle family, heuristics in the spirit of Theorem \ref{thm:alex_col}) to improve efficience of knottedness certification in practice.
To extend the method towards general knot recognition, a generalization of Theorem \ref{thm:unknot_col} is much needed.

\end{document}